\newcommand{\oR}{{\mathbb R}}
\newcommand{\oN}{{\mathbb N}}
\newcommand{\xx}{\mathbf{x}}
\newcommand{\yy}{\mathbf{y}}
\newcommand{\cfa}{\mathbf{a}}
\newcommand{\EE}{{\mathbb E}}
\newcommand{\cX}{{\cal{X}}}
\newcommand{\1}{{\mathbf{1}}}
\newtheorem{res}{Lemma}
\newtheorem{cor}{Corollary}
\newtheorem{thm}{Theorem}
\newenvironment{proof}{\noindent{\bf Proof:}}{\hfill $\square$ \\}
\begin{document}
\bibliographystyle{plain}
\thispagestyle{empty}
\begin{center}
{\Large\bf Optimal decision rules for marked point process models}\\[.4in]

\noindent
{\large M.N.M. van Lieshout}\\[.1in]
\noindent
{\em CWI, P.O.~Box 94079, NL-1090 GB  Amsterdam, The Netherlands \\[.1in]
Department of Applied Mathematics,
University of Twente, P.O.~Box 217 \\ NL-7500 AE Enschede,
The Netherlands}\\[.1in]
\end{center}

\begin{abstract} \noindent
We study a Markov decision problem in which the state space is the set of 
finite marked point configurations in the plane, the actions represent 
thinnings, the reward is proportional to the mark sum which is discounted 
over time, and the transitions are governed  by a birth-death-growth process. 
We show that thinning points with large marks is optimal when births follow 
a Poisson process and marks grow logistically. Explicit values for the thinning
threshold and the discounted total expected reward over finite and infinite 
horizons are also provided. When the points are required to respect a hard 
core distance, upper and lower bounds on the discounted total expected reward 
are derived.  \\

\noindent
{\em 2020 Mathematics Subject Classification:}
60G55, 90C40.
\end{abstract}

\section{Introduction}
\label{S:intro}

The classic Markov decision process \cite{Bert95,FeinSchw02,Pute94} on a 
finite state space ${\cal{X}}$ and action set $A$ is defined as follows. 
Write $A(x)$ for the subset of $A$ which contains all actions that may be 
taken in state $x\in{\cal{X}}$. Then, a policy $\phi$ is a procedure for the 
selection of an action at each decision epoch $i\in \oN_0$. Such a policy 
could be random or deterministic, and in principle take into account the 
entire history of the process. Often though, one may restrict attention to 
the class of deterministic Markov policies. Such a policy  $\phi = 
(\phi_i)_{i=0}^\infty$ is a sequence of mappings $\phi_i: {\cal{X}} \to A$
that, at time $i$, assign an action $a = \phi_i(x) \in A(x)$ to the current 
state $x$. In doing so, a direct reward $r(x,a)$ is earned and a probability 
mass function $p(\cdot|x,a)$ on ${\cal{X}}$ governs the next state of the 
process. Being Markovian, only the current state and action are important; 
the past history is irrelevant. A policy $(\phi_i)_{i}$ is said to be 
stationary if its members $\phi_i$ do not depend on the time $i$. 

Let $(X_i, Y_i)$ denote the stochastic process of states $X_i$ and 
actions $Y_i$. Write $\EE_x^\phi$ for its expectation when the
initial state $X_0=x$ and the transitions are driven by policy $\phi$.
Then an optimal policy maximises the discounted total expected reward
\begin{equation}
v_\alpha^\phi(x) =
\EE_x^\phi\left[ \sum_{i=0}^{\infty} \alpha^i r(X_i, Y_i) \right],
\label{e:discount}
\end{equation}
$0 \leq \alpha < 1$.
The reward function is usually assumed to be bounded, in which case 
(\ref{e:discount}) is well-defined. When the state and action spaces 
are both finite, it is well known \cite[Theorem~5.5.3b]{Pute94} that it 
suffices to consider only Markov policies and, by \cite[Theorem~6.2.10]
{Pute94}, one may restrict oneself even further to the class of Markov 
policies that are deterministic and stationary. The maximal discounted
total expected reward can be found by policy iteration \cite[Theorem~6.4.2]
{Pute94} or value iteration, also known as successive approximation or 
dynamic programming.

When the cardinality of the state or action space is infinite, policy
iteration is not guaranteed to converge in a finite number of steps.
The dynamic programming approach on the other hand is amenable to
generalisation to more general state and action spaces. Results in
this direction include \cite{BertShrev78,FeinLewi07,HernLass96,Scha93}. 
The tutorial by Feinberg \cite{Fein16} provides an exhaustive overview with 
particular emphasis on inventory control problems.

In this paper, we concentrate on the case where the state space ${\cal{X}}$
consists of finite simple marked point patterns in two-dimensional
Euclidean space. Markov decision theory using spatial point process models 
has found many applications in mobile network optimisation. However, the 
role of the point process is auxiliary in that it is used to model the 
spatial distribution of users, base stations and so on, from which coverage
probabilities and other performance characteristics of the network can be 
calculated \cite{BaccBart09,Leeetal20,Luetal21,Khloetal15}. 
Spatial point process models are also convenient in multi-target tracking 
\cite{Lies08} and their void probabilities or divergence measures can form 
the basis for observer trajectory optimisation \cite{Bearetal17}. 

Our focus of interest here is to assume that the actions operate directly
on the point process. More precisely, we assume that, at decision epoch $i$,
an action $\phi_i(\xx)$ maps $\xx$ into a subset of $\xx$. In other words, the
action set $A(\xx)$ is the finite power set of $\xx$. When the decision to 
retain a point or not is based on the mark or the inter-point distances, 
it can be interpreted as a (mark-)dependent thinning \cite{Mate86,Myll09}.
The set-up described above is appropriate for harvesting problems in forestry 
\cite{Pret09}. Here, the classical strategy is to use discretised stand based 
growth tables and dynamic programming \cite{Ronn03}. Point pattern based 
policies have been rarer due to `a lack of models and to difficulties in 
selecting trees to be removed' \cite{PukkMiin98} and tend to be simulation 
based \cite{Franetal20,Pukketal15,RensSark01,Rensetal09}. One example is
German thinning,  which enhances natural selection by picking trees whose
diameter at breast height is at most $d$ and fells a fraction of them. 
More formally, if each $X_i$ consists of tree locations marked by diameter 
at breast height, German thinning fells a fraction of the set
\(
\{ (x,m) \in X_i: m \leq d \}
\)
and the Markov transition kernel governs the growth of the remaining trees 
(for example using the logistic growth curve or extensions such as the Richards 
curve \cite{Rich59}) as well as natural births and deaths (e.g.\ a hardcore model 
\cite{KellRipl76}, the asymmetric soft core models of \cite{Lies09} or the 
dynamic models of \cite{RensSark01}).  French thinning is similar, except that 
a fraction of trees with large rather than small sizes is removed to 
stimulate forest rejuvenation. In either case, picking a policy amounts to 
choosing the level $d$. Simulations suggest that French thinning might be
the better strategy \cite{Franetal20}.

The paper's plan is as follows. In Section~\ref{S:Verhulst}, we study a  
decision process in which the actions consist of deleting a subset of the current 
points and the reward is proportional to the marks. The stochastic process that 
governs the dynamics is a birth-and-death process with independent deaths and  
a Poisson process of births; the marks grow logistically. We calculate the 
discounted total expected reward function over finite and infinite horizons 
and show that French thinning is an optimal policy. An explicit expression for 
the mark threshold $d$ is derived too. In Section~\ref{S:hardcore}, 
we move on to allow interaction between the points and replace the Poisson 
birth process by one in which no point is allowed to come too close to
another point. In this setting, we provide upper and lower bounds on the 
discounted total expected reward function over finite and infinite horizons. The 
tightness of the bounds is investigated by means of some simulated examples.
We conclude by mentioning some topics for further research.

\section{Marked Poisson process model with logistic growth}
\label{S:Verhulst}

\subsection{Definition of the model}
\label{S:Defs}

To define a Markov decision process \cite[Section~2.3.2]{Pute94},
let the state space $\cX$ consists of finite simple marked point patterns
on a compact set $W \subset \oR^2$ with marks in $L =[0,K]$ 
for some $K > 0$. When $\cX$ is equipped with the Borel $\sigma$-algebra
of the weak topology, by the discussion below \cite[Prop~9.1.IV]{DaleVere88}, 
$\cX$ is Polish. When at time $i \in \oN_0$ the process is in state $\xx$, 
a thinning action is carried out, resulting in a new state $\cfa$ that 
consists of all retained points $\cfa \subset \xx$. Thus, the action space 
$A(\xx)$ is finite and contains all subsets of $\xx$. 
Define a stationary reward function $r_i(\xx, \cfa) = r(\xx, \cfa)$ by
\begin{equation}
 r(\xx, \cfa ) = R \sum_{ (x,m) \in \xx \setminus \cfa } m, \quad 
 \xx \in \cX, \cfa \subset \xx.
\label{e:reward}
\end{equation}
Thus, the reward is proportional to the sum of the marks of all 
removed points. When $R > 0$, $r(\cdot, \cdot) \geq 0$. Since the mark 
content in an $\oR^+$-marked point process is a random variable 
\cite[Proposition~6.4.V]{DaleVere88}, $r$ is well-defined.

Upon taking action $\cfa$ in state $\xx$, the dynamics that lead to
the next state are modelled as a birth-death-growth process. Specifically,
the marks of the retained points $(x,m) \in \cfa$ grow according to the 
well-known logistic model that was proposed around 1840 by Verhulst and 
Quetelet \cite{Rich59}. In this model, when the mark at time $0$
is $m_0 > 0$, the mark at time $n \in \oN \cup \{ 0 \}$ is
\begin{equation}
\label{e:gn}
g^{(n)}(m_0) =
 \frac{K}{ 1 + \left( \frac{K}{m_0} - 1 \right) e^{-\lambda n} }.
\end{equation}
By convention, $g^{(n)}(0) = 0$. The parameter $\lambda > 0$ governs 
the rate of growth and $K \geq m_0 \geq 0$ is an upper bound on the size. 
In combination with independent births and deaths, the next state is 
defined by the following dynamics:
\begin{itemize}
\item delete $\xx \setminus \cfa$;
\item independently of other points, let each $(x_i, m_i) \in \cfa$ die with 
      probability $p_d \in (0,1)$ (natural deaths) and otherwise grow to
      $(x_i, g^{(1)}(m_i))$ as in (\ref{e:gn});
\item add a Poisson process on $W$ with intensity $\beta > 0$ and mark its
      points independently according to a probability measure $\nu$ on $[0,K]$.
\end{itemize}

Write $(X_i, Y_i)_{i=0}^\infty$ for the sequence of successive states $X_i$ and 
actions $Y_i$. A randomised policy $\phi = (\phi_i)_{i=0}^\infty$ is a sequence 
of conditional probability kernels 
$\phi_i(\cdot | X_0$, $Y_0, \dots$, $ X_{i-1}$, $Y_{i-1}$, $X_i)$
on $A$ to generate $Y_i$ based on the history of the process such that 
$\phi_i( A(\xx_i) | \xx_0, \cfa_0, \dots, \xx_i) = 1$. If the policy is 
Markov and deterministic, $Y_i$ is simply a function of $X_i$, and one may 
write $Y_i= \phi_i(X_i)$. Then, for $0 \leq \alpha < 1$, the infinite horizon 
$\alpha$-discounted total expected reward function (\ref{e:discount}) under 
policy $\phi = (\phi_i)_{i=0}^\infty$ with initial state $X_0 = \xx$ is 
\begin{equation}
  v_\alpha^\phi(\xx) = \EE^\phi \left[
    \sum_{i=0}^\infty \alpha^i  \left(
       R \sum_{(x,m)\in X_i \setminus Y_i} m  \right)
     \mid X_0 = \xx \right].
\label{e:value}
\end{equation}

The following Lemma shows that the model is well-defined for the 
birth-death-grow dynamics defined above.

\begin{res} \label{L:reward-finite}
The infinite horizon $\alpha$-discounted total expected reward function 
$v_\alpha^\phi(\xx)$, $\xx \in \cX$, defined in (\ref{e:value})
is finite for all $0 \leq \alpha < 1$, all $R>0$ and all policies $\phi$.
\end{res}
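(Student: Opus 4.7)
The plan is to bound the expected per-epoch reward uniformly in $i$ and in $\phi$, and then conclude via the geometric series $\sum_i \alpha^i$. First I would note that every mark lies in $[0,K]$, so that, regardless of which action is taken,
\[
r(X_i,Y_i) = R\sum_{(x,m)\in X_i\setminus Y_i} m \;\leq\; RK\,|X_i\setminus Y_i| \;\leq\; RK\,|X_i|.
\]
Hence it suffices to bound $\EE^\phi[\,|X_i|\,]$ uniformly.

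Next I would decompose one transition from $X_i$ to $X_{i+1}$ into (i) thinning to $Y_i\subset X_i$, which can only decrease cardinality; (ii) independent natural deaths, each of the $|Y_i|$ surviving points being removed with probability $p_d$; and (iii) superposition of an independent Poisson process on $W$ with intensity $\beta$. Logistic growth changes marks but not point counts. Conditioning on the entire history through epoch $i$ therefore gives
\[
\EE^\phi\bigl[\,|X_{i+1}| \,\big|\, X_0,Y_0,\dots,X_i,Y_i\bigr] \;=\; (1-p_d)\,|Y_i| + \beta|W| \;\leq\; (1-p_d)\,|X_i| + \beta|W|.
\]
Taking unconditional expectation and iterating this linear recursion from $|X_0|=|\xx|$, I obtain
\[
\EE^\phi[\,|X_i|\,] \;\leq\; (1-p_d)^i\,|\xx| + \frac{\beta|W|}{p_d}\bigl(1-(1-p_d)^i\bigr) \;\leq\; |\xx| + \frac{\beta|W|}{p_d} \;=:\; M(\xx),
\]
a bound that is uniform in $i$ and in the policy $\phi$.

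Substituting back, $v_\alpha^\phi(\xx) \leq RK\,M(\xx)\sum_{i=0}^\infty \alpha^i = RK\,M(\xx)/(1-\alpha)<\infty$, as desired. I do not expect a genuine obstacle. The only point needing a touch of care is that $\phi$ is permitted to be randomised and history-dependent, so that the dominating recursion must hold conditionally on the full past; but the deterministic inequality $|Y_i|\leq|X_i|$ makes this automatic and produces the same bound for every admissible policy.
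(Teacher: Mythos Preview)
Your proof is correct and follows essentially the same approach as the paper: bound the per-epoch reward by $RK$ times the expected cardinality of $X_i$, control that cardinality through the birth--death dynamics, and sum against $\alpha^i$. The paper tracks survivors generation by generation to obtain the bound $(1-p_d)^i|\xx| + \beta|W|\sum_{k=0}^{i-1}(1-p_d)^k$ and then sums term by term, whereas you reach the same expression via the one-step recursion and then pass to the uniform bound $M(\xx)$ before summing; this is a purely cosmetic repackaging.
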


\begin{proof}
Pick $\xx \in \cX$ and write $n(\xx) < \infty$ for its cardinality. 
Since the growth function (\ref{e:gn}) is bounded by $K$, 
\[
 \EE\left[ \sum_{(x,m)\in X_0 \setminus Y_0} m \mid X_0 = \xx \right] 
 \leq K n(\xx).
\]
For $i > 0$,  $X_i$ is the union of survivors from $\xx$,
from subsequent generations starting with $X_1\setminus X_0$ up to
$X_{i-1} \setminus X_{i-2}$ and points born in the last decision epoch.
Therefore, recalling the birth and death dynamics,
\[
  \EE \left[ \sum_{(x,m)\in X_i \setminus Y_i} m | X_0 = \xx \right] 
 \leq 
   K n(\xx) (1-p_d)^i \\
   + 
   K  \beta |W| \sum_{k=0}^{i-1} (1-p_d)^k
\]
where $|W|$ denotes the area of $W$. Hence
\[
v_\alpha^\phi(\xx) \leq R K n(\xx) \sum_{i=0}^\infty \alpha^i (1-p_d)^i 
+ R K \beta |W| \sum_{i=1}^\infty \alpha^i \sum_{k=0}^{i-1} (1-p_d)^k.
\]
For all $p_d \in (0,1)$, the first series in the right hand side converges
to $ 1 / ( 1 - \alpha (1-p_d))$. Since
\[
  \sum_{i=1}^\infty \alpha^i \sum_{k=0}^{i-1} (1-p_d)^k 
  = \sum_{i=1}^\infty \alpha^i \frac{1-(1-p_d)^i}{p_d} \leq
  \frac{1}{p_d} \sum_{i=1}^\infty \alpha^i < \infty
\]
for all $p_d \in (0,1)$, $v_\alpha^\phi(\xx)$ is finite.
\end{proof}

The reward function $r$ itself is not bounded, so the (N) regime of 
\cite[Chapter~9]{BertShrev78} applies.

\subsection{Optimal policy and reward}
\label{S:French}

The optimal $\alpha$-discounted total expected reward $v^*_\alpha(\xx)$ is 
defined  as the supremum of the $v_\alpha^\phi(\xx)$ over all policies, 
including randomised ones. In this section, we will show that French thinning 
is optimal and give an explicit expression for the corresponding reward. 

By \cite[Proposition~9.1]{BertShrev78}, the supremum in the definition of
$v^*_\alpha(\xx)$ may be taken over the class of Markov policies, and,
by \cite[Proposition~9.8]{BertShrev78}, satisfies the equation
\begin{equation} \label{e:Bellman}
v^*_\alpha(\xx) = \max_{\cfa \subset \xx} \left\{
  R \sum_{(x,m) \in \xx \setminus \cfa} + \alpha \EE \left[ v^*_\alpha(X) \mid \xx, \cfa \right]
\right\}
\end{equation}
where $X$ is distributed according to the one step birth-death-growth
dynamics from state $\xx$ under action $\cfa$. Observe that the optimality 
equations (\ref{e:Bellman}), $\xx \in \cX$, are not sufficient conditions 
for $v^*_\alpha$. Nevertheless, $v^*_\alpha(\xx)$ can be calculated as the 
limit of an iterative procedure \cite[Proposition~9.14]{BertShrev78} known 
as the dynamic programming algorithm. Set $v_0(\xx) = 0$ for all $\xx \in \cX$ 
and set $n=1$.  Define, for every $\xx \in \cX$, 
\[
v_n(\xx) = \max_{\cfa \subset \xx} \left\{
   R \sum_{(x,m)\in \xx \setminus \cfa } m 
+ \alpha \EE \left[ v_{n-1}(X)  \mid \xx, \cfa \right] \right\}.
\]
Then set $n = n + 1$ and repeat. This algorithm converges to $v^*_\alpha(\xx)$ 
as $n\to \infty$ by \cite[Proposition~9.14]{BertShrev78} but -- in general -- 
is of little help in constructing an optimal policy, let alone a stationary one. 
Given a stationary policy $\phi$, a necessary and sufficient condition for it 
to be optimal is \cite[Prop.~9.13]{BertShrev78}
\begin{equation}
v_{\alpha}^{\phi}(\xx) = \max_{\cfa \subset \xx} \left\{
R \sum_{(x,m) \in \xx \setminus \cfa } +
\alpha \EE\left[
v_{\alpha}^{\phi}(X) | \xx, \cfa \right]
\right\}.
\label{e:optimalDiscount}
\end{equation}

For our model, the dynamic programming algorithm does suggest 
an optimal deterministic and stationary Markov policy. 

\begin{thm}
\label{t:simple}
Consider the Markov decision process with state space $\cX$, action
spaces $A(\xx) = \{ \yy \in \cX: \yy \subset \xx\}$, $\xx \in \cX$, 
reward function (\ref{e:reward}) with $R > 0$, and birth-death-growth 
dynamics based on independent deaths with probability $p_d \in (0,1)$, 
a Poisson birth process with intensity $\beta > 0$ marked independently 
according to probability measure $\nu$ on $[0,K]$ for $K > 0$ and logistic 
growth function (\ref{e:gn}).
Then, for $0\leq \alpha < 1$,
\[
  v^*_\alpha(\xx) =  R \beta | W| \sum_{k=1}^\infty \alpha^k \int_0^K s(m) \, d\nu(m)
  + R \sum_{(x,m)\in \xx} s(m),
\]
where $|W|$ is the area of $W$ and
\[
s(m) =  \sup_{n\in \oN_0}  \left\{ \frac{ K \alpha^n ( 1 - p_d)^n }{
1 + \left( \frac{K}{m} - 1 \right) e^{-\lambda n} } \right\}, 
\quad m \in [0, K].
\]
Furthermore, the optimal $\alpha$-discounted total expected reward  
corresponds to a French thinning that removes all points with a mark 
that is at least
\[
   d^*_\alpha =  \sup_{n\in \oN_0} \left\{ \frac{K}{1 - e^{-n\lambda}} \left(
    \alpha^n ( 1 - p_d)^n - e^{-n \lambda} \right) \right\}.
\]
\end{thm}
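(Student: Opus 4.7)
The plan is to carry out the dynamic programming value iteration explicitly, showing by induction on the horizon $n$ that
\[
v_n(\xx) = R \sum_{(x,m)\in \xx} s_n(m) + R\beta|W| \sum_{k=1}^{n-1}\alpha^k \int_0^K s_{n-k}(m)\,d\nu(m),
\]
where $s_n(m) = \max_{0 \leq j \leq n-1} \alpha^j (1-p_d)^j g^{(j)}(m)$, and then passing to the limit $n \to \infty$ via \cite[Proposition~9.14]{BertShrev78}. The idea driving the ansatz is that $\alpha^j(1-p_d)^j g^{(j)}(m)$ is exactly the present-value expected reward attributable to a single initial point with mark $m$ if one decides to postpone its harvest by $j$ steps: $(1-p_d)^j$ is the survival probability up to that time, $g^{(j)}(m)$ the mark then, and $\alpha^j$ the discount. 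Because the reward is additive over points, deaths are independent, and the Poisson births are independent of the existing configuration, the overall optimisation should decouple into independent per-point decisions plus a pure birth-reward term.

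The base cases $v_0 \equiv 0$ and $v_1(\xx) = R \sum_{(x,m) \in \xx} m$ are immediate. For the inductive step I substitute the hypothesis into the Bellman recursion and use the transition dynamics to split $\alpha\,\EE[v_{n-1}(X)\mid \xx,\cfa]$ into three pieces: a contribution from the retained points, which reappear at the next step with probability $1-p_d$ and grown mark $g^{(1)}(m)$; a contribution from the new Poisson births that is independent of $\cfa$; and the inductive birth-sum, also independent of $\cfa$. The maximum over $\cfa \subset \xx$ therefore reduces to an independent per-point comparison of removing now (gain $Rm$) against retaining (gain $\alpha R (1-p_d) s_{n-1}(g^{(1)}(m))$). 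Using the semigroup identity $g^{(j)}(g^{(1)}(m)) = g^{(j+1)}(m)$, the latter equals $R\max_{1\leq j\leq n-1} \alpha^j(1-p_d)^j g^{(j)}(m)$, and its maximum with $Rm$ is precisely $R s_n(m)$. A shift of index in the birth summation closes the induction.

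Since $s_n(m) \uparrow s(m)$ monotonically and all summands are non-negative, monotone convergence delivers the closed-form expression for $v^*_\alpha$ in the theorem statement. The optimal per-point decision in the limit is to remove $(x,m)$ iff $m \geq \alpha(1-p_d) s(g^{(1)}(m))$, and expressing $s(m) = \max(m,\, \alpha(1-p_d) s(g^{(1)}(m)))$ shows this is equivalent to $s(m) = m$. Unfolding $s(m) = m$ into the family of inequalities $m \geq \alpha^n(1-p_d)^n g^{(n)}(m)$ for $n \geq 1$ and rearranging using the explicit logistic form gives $m \geq d^*_\alpha$, identifying the policy as French thinning at the stated threshold. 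That this stationary deterministic Markov policy is optimal can finally be confirmed against (\ref{e:optimalDiscount}) via \cite[Proposition~9.13]{BertShrev78}. The main technical hurdle, in my view, is justifying the pointwise decoupling inside the Bellman maximisation cleanly; once this is in hand, the remainder is mostly re-indexing and a monotone-limit argument.
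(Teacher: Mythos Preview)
Your proposal is correct and follows essentially the same route as the paper: an induction on the horizon $n$ establishing the separable form $v_n(\xx)=R\sum s_n(m)+R\beta|W|\sum_{k=1}^{n-1}\alpha^k\int s_{n-k}\,d\nu$, the per-point decoupling of the Bellman maximisation via $s_{n+1}(m)=\max\{m,\alpha(1-p_d)s_n(g^{(1)}(m))\}$, and the passage to the limit through \cite[Proposition~9.14]{BertShrev78}. The only noticeable difference is in the final step: the paper verifies optimality of the stationary threshold policy by explicitly computing $v_\alpha^{d^*}(\xx)$ through a generation-by-generation accounting (each initial point contributes $R\,s(m)$, and generation $k$ of births contributes the same discounted by $\alpha^k$), whereas you simply invoke (\ref{e:optimalDiscount}); to apply \cite[Proposition~9.13]{BertShrev78} you still need $v_\alpha^{d^*}$ in hand, so that computation is implicitly required in your sketch as well.
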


For $\alpha = 1$, the total expected reward $v^*_1(\xx)$ is infinite.

\medskip


\begin{proof}
After initialising $v_0(\xx) = 0$ for all $\xx \in \cX$, clearly the 
optimal expected reward at time $0$ is
\(
v_1(\xx) =  R \sum_{(x,m)\in \xx} m,
\)
which is attained for action $\cfa = \emptyset$, or, in other words,
by removing all points with mark greater than or equal to 
$d_1 = 0$. The proof proceeds by induction. Set, for $n\in\oN$,
\begin{equation}
d_n = \max\left\{ 0,  K \frac{ \alpha ( 1- p_d) - e^{-\lambda}}
  {1 - e^{- \lambda}}, \,  \dots, \,
   K \frac{ \alpha^{n-1} ( 1- p_d)^{n-1} - e^{-(n-1)\lambda}}
      {1 - e^{- (n-1)\lambda}} 
\right\}
\label{e:an}
\end{equation}
and suppose that the optimal $\alpha$-discounted expected reward 
over $n$ steps is attained by French thinning at level $d_n$ and given by
\begin{equation}
  v_n(\xx)  = 
  R \beta |W| 
  \sum_{k=1}^{n-1} \alpha^k \int_0^K s_{n-k}(m) \, d\nu(m) 
   + R \sum_{(x,m)\in \xx} s_n(m)
\label{e:vn}
\end{equation} 
where,  for $1\leq k\leq n$,
\[
s_k(m) =  \max\left\{ 
    m, \alpha \left(1-p_d \right) g^{(1)}(m), \, \dots, \, 
    \alpha^{k-1} \left(1-p_d \right)^{k-1} g^{(k-1)}(m) 
\right\}.
\]
Now, for $n+1$, the optimal finite horizon $\alpha$-discounted 
expected reward is 
\[
  v_{n+1}(\xx) = \max_{ \cfa \subset \xx} \left\{
     R \sum_{(x,m)\in \xx\setminus \cfa} m 
   +  \alpha \EE \left[ v_n(X) \mid \xx, \cfa \right] 
  \right\}.
\]
By the induction assumption, the discounted expectation 
$\alpha \EE \left[ v_n(X) \mid \xx, \cfa \right]$ is the sum of
\[
  \alpha R \beta |W| \sum_{k=1}^{n-1} \alpha^k \int_0^K s_{n-k}(m) \, d\nu(m)
  =  R \beta |W| \sum_{k=2}^{n} \alpha^k \int_0^K s_{n+1-k}(m) \, d\nu(m)
\]
and contributions from the points in $\cfa$ that survive a decision epoch
as well as from points born in the interval between time $n$ and $n+1$. These contributions are, respectively,
\[
  \alpha R \sum_{(x,m) \in \cfa} \left(1 - p_d \right) s_n( g(m) )
\]
and, using the Campbell--Mecke formula \cite[Section~6.1]{DaleVere88},
\[
 \alpha R \beta |W| \int_0^K s_n(m) \, d\nu(m).
\]
The optimal policy assigns a point $(x,m) \in \xx$ to $\xx\setminus \cfa$
if and only if 
\(
 m \geq \alpha(1-p_d) s_n ( g^{(1)}(m) ).
\)
By the induction assumption and (\ref{e:gn}), this is the case if and only if
\begin{equation}
m \geq \alpha ^k( 1-p_d)^k  g^{(k)}(m) 
\Leftrightarrow 
   m \geq K \frac{\alpha^k (1-p_d)^k - e^{-k\lambda}}{1 - e^{-k\lambda}}
   \label{e:threshold}
\end{equation}
for all integers $1 \leq k \leq n$. Consequently, 
$d_{n+1}$ has the required form. For this allocation rule, the reward is
\(
\max\left\{ m,  \alpha \left( 1 - p_d \right) s_n( g^{(1)}(m) ) \right \} = s_{n+1}(m)
\)
and the induction step is complete.

Next, let $n$ go to infinity and fix $m \in [0, K]$. Then $s(m)$
is finite for all $p_d \in (0,1)$ and $0\leq \alpha < 1$. Additionally,
$\lim_{n\to\infty} s_n(m) = s(m)$. Thus, for 
any $\xx \in \cX$,  
\[
  R \sum_{(x,m)\in\xx} s_n(m) \to  R \sum_{(x,m)\in\xx} s(m)
\]
as $n\to \infty$. Furthermore,
\[
  \sum_{k=1}^{n-1} \alpha^k \int_0^K s_{n-k}(m) \, d\nu(m)
\to
\sum_{k=1}^\infty \alpha^k \int_0^K s(m) \, d\nu(m), \quad n\to \infty,
\]
because of dominated convergence applied to the doubly indexed
sequence $a_{k,n}$ defined by
\(
\1\left\{ k \leq n-1 \right\} \alpha^k \int s_{n-k} \, d\nu.
\)
In conclusion, for each $\xx \in \cX$, $\lim_{n\to\infty} v_n(\xx) =
v^*_\alpha(\xx)$, the optimal $\alpha$-discounted total expected reward
\cite[Proposition~9.14]{BertShrev78}, and $v^*_\alpha(\xx)$ has the
claimed form.

To complete the proof, we need to show that $v^*(\xx)$ is attained by
the stationary deterministic policy that retains all points with mark smaller
than $d^*_\alpha$. Denote its infinite horizon $\alpha$-discounted total 
expected reward by 
\[
 v_\alpha^{d^*}(\xx) = \EE \left[
 R \sum_{i=0}^\infty \alpha^i 
 \sum_{(x,m)\in X_i} m \, \1\{ m \geq d^*_\alpha \} \mid X_0 = \xx \right]
\]
and focus on the contributions of each generation of points. 
A point $(x,m) \in \xx$, the initial generation, yields a reward
$R \, \alpha^n (1-p_d)^n g^{(n)}(m)$ precisely when $g^{(n-1)}(m)$ is 
less than $d^*_\alpha$ but $g^{(n)}(m) \geq d^*_\alpha$. 
Since, as in (\ref{e:threshold}),
\(
g^{(n)}(m) \geq d^*_\alpha
\)
if and only if 
\[
g^{(n)}(m) \geq \alpha^k ( 1 - p_d  )^k g^{(n+k)}(m)
\]
for all $k \in \oN_0$, 
we conclude that every point of $\xx$ contributes $R \, s(m)$.
The points that are born in the first decision epoch (generation $1$) 
yield the same total expected reward, but this is discounted by $\alpha$
due to the later birth date. Similarly, the total expected reward of points 
belonging to the second generation is discounted by $\alpha^2$, and so on. 
Tallying up, the  $\alpha$-discounted total expected reward of generations
$k = 1, 2, \dots$ is
\[
 R \beta |W| \sum_{k=1}^\infty \alpha^k \int_0^K s(m) \, d\nu(m)
\]
on application of the Campbell--Mecke formula. Finally add the contribution
from the initial generation to conclude that the threshold $d^*_\alpha$ 
defines an optimal policy. Condition (\ref{e:optimalDiscount})
is readily verified.
\end{proof}

As a by-product, the proof of Theorem~\ref{t:simple} derives the optimal
$\alpha$-discounted total expected reward (\ref{e:vn}) for finite time 
horizons too, and French thinning with threshold (\ref{e:an}) is an 
optimal policy. The suprema in $s(m)$ and $d^*_\alpha$ are attained,
which can be seen by considering the limit for $n\to\infty$. 

\section{Hard core models with logistic growth}
\label{S:hardcore}

\subsection{Bounds for the optimal discounted total expected reward}
\label{S:bounds}

In this section, we refine the Poisson model of the previous section to
the case where births are governed by a hard core process. Thus, the 
state space $\cX_K$ consists of all finite simple marked point patterns
on a compact set $W$ in the plane that contain no pair $\{ x_1, x_2 \}$ 
such that $|| x_1 - x_2 || \leq K$ with marks in $L = [0, K]$. For the 
motivating example from forestry in which the marks correspond to the 
diameter at breast height, the condition ensures that all trees can grow 
to their maximal size. 

As in Section~\ref{S:Defs}, when at time $i \in \oN_0$ the process is in 
state $\xx$, a thinning action is carried out, resulting in a new state 
$\cfa$ that consists of all retained points. The reward is defined
in (\ref{e:reward}).

The dynamics are modified in such a way that the hard core is respected. 
Specifically, suppose that action $\cfa$ is taken in state $\xx \in \cX_K$. 
The next state is then governed by the following birth-death-growth process:
\begin{itemize}
\item delete $\xx \setminus \cfa$;
\item independently of other points, let each $(x_i, m_i) \in \cfa$ die with
      probability $p_d \in (0,1)$ and otherwise grow to
      \(
      (x_i, g(m_i) )
      \)
      for some bounded, continuous function $g: [0,K] \to [0,K]$
      satisfying $m \leq g(m)$ for $m \in [0,K]$;
\item add a hard core process on $W$ with hard core distance $K$ and 
      intensity $\beta > 0$; mark its points independently according to 
      a probability measure $\nu$ on $[0,K]$ and remove all points that
      fall within distance $K$ to a point in $\cfa$.
\end{itemize}

In this framework, the reward function is bounded since the hard core 
condition implies an upper bound on the number of points that can be 
alive at any time. We are therefore in the (D) regime of 
\cite[Chapter~9]{BertShrev78}. 

For $\xx\in \cX_K$, define $v^*_\alpha(\xx)$ as the supremum of 
(\ref{e:value}) over all policies $\phi$. By \cite[Proposition~9.1]
{BertShrev78} it suffices to consider Markov policies only, and 
$v^*_\alpha(\xx)$ is the limit of the dynamic programming algorithm
\cite[Proposition~9.14]{BertShrev78}. The optimality condition 
(\ref{e:optimalDiscount}) applies. Moreover, since the action 
sets are finite, Corollary 9.17.1 in \cite{BertShrev78} guarantees
the existence of an optimal deterministic stationary policy.
An explicit expression seems hard to obtain. However, the following 
bounds are available.

\begin{thm} 
Consider the Markov decision process with state space $\cX_K$, action
spaces $A(\xx) = \{ \yy \in \cX_K: \yy \subset \xx \}$, $\xx \in \cX_K$, 
reward function (\ref{e:reward}) with $R > 0$, and birth-death-growth 
dynamics based on independent deaths with probability $p_d \in (0,1)$, 
a hard core birth process with intensity $\beta > 0$ marked independently 
according to probability measure $\nu$ on $[0,K]$ for $K > 0$ and growth 
function $g$. Write $g^{(n)}(m)$ for the $n$-fold composition of $g$.

For $\alpha \in [0,1)$, initialise $v_0(\xx) = 0$ for all $\xx\in \cX_K$.  
Define, for $n\in\oN$ and $\xx\in\cX_K$,
\[
v_n(\xx) = \max_{\cfa \subset \xx} \left\{ 
R \sum_{(x,m)\in\xx\setminus\cfa} m + \alpha \EE\left[
    v_{n-1}(X) \mid \xx, \cfa \right] \right\}.
\]
Then $\tilde v_n(\xx) \leq v_n(\xx) \leq \hat v_n(\xx)$ where
\begin{eqnarray*}
\tilde v_n(\xx) & = & R \sum_{(x,m)\in \xx} \tilde s_n(x,m) 
+ R \beta \sum_{k=1}^{n-1} \alpha^k \int_{W} \int_0^K 
   \tilde s_{n-k}(w,l) \, d\nu(l) dw \\
\hat v_n(\xx) & = & R \sum_{(x,m)\in \xx} \hat s_n(m) 
+ R \beta |W| \sum_{k=1}^{n-1} \alpha^k 
\int_0^K \hat s_{n-k}(l) \, d\nu(l)  \\
\end{eqnarray*}
with $\tilde s_0 = \hat s_0 = 0$ and, for $n\in\oN$,
\[
\hat s_n(m) = \max \left\{ 
 m, \alpha (1-p_d) g^{(1)}(m), \,\dots, \,
    \alpha^{n-1} (1-p_d)^{n-1} g^{(n-1)}(m)
\right\}
\]
and, writing $b(x,K)$ for the closed ball centred at $x$ with radius $K$,
\[
\tilde s_n(x,m) = \max\{ 
 m, \alpha (1-p_d) g^{(1)}(m) - \alpha K \beta | b(x,K) \cap W |, \, \dots,
\]
\[ 
 \alpha^{n-1} (1-p_d)^{n-1} g^{(n-1)}(m) 
 - \alpha K \beta | b(x,K) \cap W | \sum_{i=0}^{n-2} \alpha^i (1-p_d)^i
\}.
\]
\label{t:hardcore}
\end{thm}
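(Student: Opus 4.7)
The plan is to prove both bounds by induction on the horizon $n$. The base case $n=0$ holds trivially since all three functions vanish. For the inductive step I would exploit the fact that $\hat v_{n-1}$ and $\tilde v_{n-1}$ are affine functionals of the point configuration, so the conditional expectation appearing in the Bellman recursion splits cleanly into a constant contribution from future generations of newborns, a survivor contribution from $\cfa$, and a newborn contribution that can be handled by Campbell--Mecke.

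For the upper bound $v_n\leq\hat v_n$, I would substitute $v_{n-1}\leq\hat v_{n-1}$ into the Bellman equation and decompose $\EE[\hat v_{n-1}(X)\mid\xx,\cfa]$. The survivor contribution equals $R(1-p_d)\sum_{(x,m)\in\cfa}\hat s_{n-1}(g^{(1)}(m))$, and the newborn contribution is dominated by $R\beta|W|\int_0^K\hat s_{n-1}(l)\,d\nu(l)$ because the first-order intensity of the hard core birth process (further thinned by the $\cfa$-balls) is at most $\beta$ and $\hat s_{n-1}\geq 0$. The maximisation over $\cfa$ then decouples point-by-point, with each $(x,m)\in\xx$ contributing $\max\{m,\alpha(1-p_d)\hat s_{n-1}(g^{(1)}(m))\}=\hat s_n(m)$, and collecting terms recovers $\hat v_n(\xx)$.

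For the lower bound $v_n\geq\tilde v_n$, substituting $\tilde v_{n-1}\leq v_{n-1}$ directly into the Bellman equation would force the newborn inequality in the wrong direction, so instead I would exhibit a specific deterministic Markov policy $\phi$ (time-inhomogeneous, depending on the remaining horizon) and argue $v_n\geq v_n^\phi\geq\tilde v_n$. Under $\phi$, each point $(x,m)$ is handled individually: at the current epoch with remaining horizon $j$ it is retained if, optimising over the number $k\in\{1,\dots,j-1\}$ of additional steps to wait, the discounted expected reward $\alpha^k(1-p_d)^k g^{(k)}(m)$ minus the worst-case blocking penalty $\alpha K\beta|b(x,K)\cap W|\sum_{i=0}^{k-1}\alpha^i(1-p_d)^i$ exceeds $m$, and is felled otherwise. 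A single-point accounting that bounds the mark of each blocked newborn by $K$ and the blocked area per active step by $|b(x,K)\cap W|$ yields precisely $\tilde s_n(x,m)$ as a lower bound on the contribution of each initial point. Applying the same analysis to each generation of newborns via Campbell--Mecke then produces the integrated term $R\beta\sum_{k=1}^{n-1}\alpha^k\int_W\int_0^K\tilde s_{n-k}(w,l)\,d\nu(l)\,dw$.

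The delicate point is that the per-point penalty $\sum_{x\in\cfa}|b(x,K)\cap W|$ over-estimates the genuine blocked area $|\bigcup_{x\in\cfa}b(x,K)\cap W|$ whenever the balls overlap, while the newborn term in $\tilde v_n$ uses the full rate $\beta$ rather than the reduced first-order intensity caused by removal near $\cfa$. These two over-estimations must compensate. I would reconcile them using the union bound $|\bigcup_{x\in\cfa}b(x,K)\cap W|\leq\sum_{x\in\cfa}|b(x,K)\cap W|$ together with the slack $\int_0^K l\,d\nu(l)\leq K$, so that the excess penalty paid on kept points exactly absorbs the deficit in the newborn term; the base calculation for $n=2$ reduces to $|B(\cfa)\cap W|(K-\int_0^K l\,d\nu(l))\geq 0$. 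Propagating this compensation through the $n-1$ generations, where the over-counting at one step must dovetail with the savings at the next, is the main technical obstacle and the reason why I would prefer the policy-based argument over a direct inductive substitution.
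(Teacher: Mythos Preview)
Your upper bound argument is essentially the paper's: substitute $v_{n-1}\leq\hat v_{n-1}$, use Campbell--Mecke, drop the indicator $\mathbf{1}\{w\notin U_K(\cfa)\}$ (since $\hat s_{n-1}\geq 0$), and let the maximisation over $\cfa$ decouple point by point into the recursion $\hat s_n(m)=\max\{m,\alpha(1-p_d)\hat s_{n-1}(g^{(1)}(m))\}$.

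For the lower bound, however, you have talked yourself out of the straightforward route. Direct inductive substitution \emph{does} work, and the paper carries it out in a few lines; your proposed detour through an explicit time-inhomogeneous policy and a multi-generation blocking ledger is unnecessary. The point you missed is that the compensation you describe in your last paragraph is exactly what closes the induction, and it happens in a single step rather than over $n-1$ generations. From $v_n\geq\tilde v_n$ one gets
\(
v_{n+1}(\xx)\geq\max_{\cfa}\{R\sum_{\xx\setminus\cfa}m+\alpha\,\EE[\tilde v_n(X)\mid\xx,\cfa]\}.
\)
Splitting $\EE[\tilde v_n(X)\mid\xx,\cfa]$ as you indicate, the newborn piece is
\(
R\beta\int_W\int_0^K\tilde s_n(w,l)\,\mathbf{1}\{w\notin U_K(\cfa)\}\,d\nu(l)\,dw.
\)
Write this as the full integral minus the integral over $U_K(\cfa)$; bound the subtracted part from above using $\tilde s_n\leq K$ and the union bound $|U_K(\cfa)\cap W|\leq\sum_{(x,m)\in\cfa}|b(x,K)\cap W|$. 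The resulting penalty $K\beta|b(x,K)\cap W|$ attaches to each retained point and is absorbed directly into the per-point recursion
\[
\tilde s_{n+1}(x,m)=\max\bigl\{m,\ \alpha(1-p_d)\,\tilde s_n(x,g^{(1)}(m))-\alpha K\beta\,|b(x,K)\cap W|\bigr\},
\]
which unwinds to the stated form of $\tilde s_{n+1}$. The full-rate term $R\beta\int_W\int_0^K\tilde s_n\,d\nu\,dw$ that remains is precisely the $k=1$ summand needed for $\tilde v_{n+1}$. There is no multi-step dovetailing to track: the induction hypothesis already encodes the accumulated penalties from earlier steps inside $\tilde s_n$, and the one new penalty from the current step is paid once and folded into $\tilde s_{n+1}$. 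So the ``main technical obstacle'' you anticipate does not arise.
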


When the growth function is logistic, 
\begin{eqnarray*}
\tilde s_n(x,m) & = & \max_{i=0, \dots, n-1} \left\{ 
\frac{K \alpha^i ( 1 - p_d )^i}{1 + \left(\frac{K}{m} - 1 \right) e^{-\lambda i}}
-\alpha K \beta | W \cap b(x,K) | 
\frac{ 1 - \alpha^{i}(1-p_d)^i }{ 1 - \alpha ( 1 - p_d) }
\right\};
\\
\hat s_n(m) & = & \max_{ i = 0, \dots, n-1} \left\{ 
\frac{ K \alpha^i ( 1 - p_d )^i }
{1 +  \left(\frac{K}{m} - 1 \right) e^{-\lambda i}}
\right\}.
\end{eqnarray*}

\mbox{}

\begin{proof}
The proof proceeds by induction. For $n=0$, evidently
$v_0 \leq \tilde v_0$. Assume that 
$\tilde v_k(\xx) \leq v_k(\xx) \leq \hat v_k(\xx)$
for all $k \leq n$ and all $\xx\in\cX_K$ and that $\tilde v_k$,
$\hat v_k$ have the required form. Since
\begin{equation}
v_{n+1}(\xx) = \max_{ \cfa \subset \xx } \left\{
   R \sum_{(x,m) \in \xx \setminus \cfa } m +
  \alpha \EE \left[ v_n(X) \mid \xx, \cfa \right]
\right\}
\label{e:vnext}
\end{equation}
and $v_n(X) \geq \tilde v_n(X)$, let us consider the expectation of 
$\tilde v_n(X)$ under the hard core birth-death-growth dynamics 
when action $\cfa$ is taken in state $\xx$. By the definition 
of $\tilde v_n$ and distinguishing between surviving and new-born points,
\begin{eqnarray*}
\EE \left[ \tilde v_n(X) \mid \xx, \cfa \right] & = &
 R \, \EE \left[ \sum_{(x,m) \in X} \tilde s_n(x,m) \mid 
    \xx, \cfa \right]
+ R \beta \sum_{k=1}^{n-1} \alpha^k \int_{W} \int_0^K
\tilde s_{n-k}(w,l) \, d\nu(l) dw  \\
& = &
R  \sum_{(x,m) \in \cfa} \left( 1-p_d \right) \tilde s_n(x,g^{(1)}(m) )
+ R \beta \sum_{k=1}^{n-1} \alpha^k \int_{W} \int_0^K
\tilde s_{n-k}(w,l) \, d\nu(l) dw  \\
& &
+ R \beta \int_{W} \int_0^K \tilde s_{n}(w,l) \,
 \1\{ w \not \in U_K(\cfa) \} \, d\nu(l) dw  
\end{eqnarray*}
where the symbol $U_K(\cfa)$ signifies the union of closed balls with 
radius $K$ around the points in $\cfa$. The calculation of the last term 
above relies on the Campbell--Mecke formula \cite[Section~6.1]{DaleVere88}. 
Now, the integral in the last line above can be written as
\[
R \beta \int_{W} \int_0^K \tilde s_n(w,l) \, d\nu(l) dw
- R \beta \int_{W} \int_0^K \tilde s_n (w, l)  \,
   \1\{ w \in U_K(\cfa \} \, d\nu(l) dw
\]
and is bounded from below by 
\begin{equation}
R \beta \int_{W} \int_0^K \tilde s_n(w,l) \, d\nu(l) dw
- R K \beta \sum_{(x,m)\in \cfa} \int_{W} \int_0^K
 \1\{ w \in b(x,K) \} \, d\nu(l)
\label{e:lowerbound}
\end{equation}
where the induction assumption is invoked for the inequality
$\tilde s_n \leq K$.
Next, return to (\ref{e:vnext}). The bound on
$\EE \left[ \tilde v_n(X) \mid \xx, \cfa \right]$ implies 
\[
v_{n+1}(\xx) \geq
 \max_{\cfa \subset \xx} \left\{ 
R \sum_{(x,m)\in\xx\setminus\cfa} m + \alpha \EE\left[
    \tilde v_{n}(X) \mid \xx, \cfa \right] \right\} \\
 \geq  \max_{\cfa \subseteq \xx}
\{
R \sum_{(x,m) \in \xx\setminus \cfa} m
+ 
\]
\[
\alpha R  \sum_{(x,m) \in \cfa} \left[ ( 1-p_d ) 
  \tilde s_n(x,g^{(1)}(m) )
- K \beta | b(x,K) \cap W | 
\right] 
 + R \beta \sum_{k=1}^{n} \alpha^k \int_{W} \int_0^K
  \tilde s_{n+1-k}(w,l) d\nu(l) dw .
\]
The policy that assigns $(x,m)$ to $\xx\setminus \cfa$ if and only if
\[
m \geq \alpha\left[ \left( 1-p_d \right) \tilde s_n(x, g^{(1)}(m)) 
  - K \beta | b(x,K) \cap W|  \right] 
\]
optimises the right hand side and, with 
\[
\tilde s_{n+1}(x,m) = \max\left\{
   m, \alpha \left( 1-p_d \right) \tilde s_{n}(x, g^{(1)}(m)) 
   - \alpha K \beta | b(x,K) \cap W | \right\},
\]
one sees that 
\[
v_{n+1}(\xx) \geq \tilde v_{n+1}(\xx) =
R \sum_{(x,m)\in \xx} \tilde s_{n+1}(x,m) + 
R \beta \sum_{k=1}^{n} \alpha^k \int_{W} \int_0^K
  \tilde s_{n+1-k}(w,l) \, d\nu(l) dw,
\]
an observation that completes the induction argument and 
therefore the proof of the lower bound.

For the upper bound $v_n \leq \hat v_n$, as in the
proof of Theorem~\ref{t:simple}, an induction 
proof applies 
based on $\hat s_n$ but with (\ref{e:lowerbound}) replaced by
the upper bound
\[
R \beta \int_{W} \int_0^K \hat s_n(w,l) \, d\nu(l) dw.
\]
\end{proof}

Over an infinite time horizon, the optimal $\alpha$-discounted
total expected reward is bounded by the same functional forms, 
which coincide if $\alpha = 0$.

\begin{cor}
The functions $\hat s_n$ and  $\tilde s_n$ defined in
Theorem~\ref{t:hardcore} take values in $[0,K]$ and 
increase monotonically to 
\[
\hat s(m) =
\sup_{n\in \oN_0} \left\{  \alpha^n (1-p_d)^n g^{(n)}(m)  \right\}, 
\quad m \in [0,K],
\]
and, for $x\in W$ and $m\in [0,K]$,
\[
\tilde s(x,m) = \sup_{n\in \oN_0} \left\{ \alpha^n (1-p_d)^n g^{(n)}(m) 
  - \alpha K | b(x,K) \cap W | \sum_{i=0}^{n-1}
  \alpha^i (1-p_d)^i \right\}.
\]
\end{cor}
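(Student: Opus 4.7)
The plan is to extract three separate assertions from the corollary and handle each in turn: (i) both $\hat s_n$ and $\tilde s_n$ take values in $[0,K]$; (ii) both sequences are nondecreasing in $n$; (iii) the pointwise limits coincide with the suprema $\hat s$ and $\tilde s$ displayed in the statement.

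For (ii), monotonicity is immediate from the definition: $\hat s_{n+1}(m)$ is the maximum of the same $n$ terms that define $\hat s_n(m)$ together with one additional term, hence $\hat s_{n+1}(m) \geq \hat s_n(m)$. The same argument works verbatim for $\tilde s_n(x,m)$. For (i), the value $m$ appears as the first term in both maxima, so both functions are bounded below by $m \geq 0$. For the upper bound on $\hat s_n$, each term $\alpha^k(1-p_d)^k g^{(k)}(m)$ is at most $g^{(k)}(m) \leq K$ because $\alpha, 1-p_d \in [0,1)$ and the growth function takes values in $[0,K]$ by assumption. For $\tilde s_n$, observe that each term is obtained from the corresponding $\hat s_n$-term by subtracting a nonnegative quantity, so $\tilde s_n(x,m) \leq \hat s_n(m) \leq K$.

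For (iii), since each of $\hat s_n$ and $\tilde s_n$ is monotone increasing in $n$ and bounded above by $K$, the limits exist pointwise. To identify them with the stated suprema, note that for every fixed $n_0 \in \oN_0$, the term $\alpha^{n_0}(1-p_d)^{n_0} g^{(n_0)}(m)$ is one of the entries over which the maximum defining $\hat s_{n_0+1}(m)$ is taken, hence $\hat s_{n_0+1}(m) \geq \alpha^{n_0}(1-p_d)^{n_0} g^{(n_0)}(m)$. Passing $n_0$ over $\oN_0$ shows that the limit dominates $\hat s(m)$. The reverse inequality $\hat s_n(m) \leq \hat s(m)$ holds for every $n$ because each term in the max defining $\hat s_n$ appears inside the supremum defining $\hat s$. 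The identical argument applies to $\tilde s_n(x,m)$ with the subtracted terms included, yielding the claimed limit $\tilde s(x,m)$.

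No step poses a genuine obstacle; the whole argument rests on the observation that both $\hat s_n$ and $\tilde s_n$ are running maxima of initial segments of the sequences whose suprema define $\hat s$ and $\tilde s$, combined with the uniform bound $K$ on the growth function. The only minor care needed is to check that the subtracted penalty $\alpha K\beta |b(x,K)\cap W|\sum_{i=0}^{k-1}\alpha^i(1-p_d)^i$ in the $\tilde s_n$-terms is nonnegative (so that the bound $\tilde s_n \leq \hat s_n \leq K$ is valid), which is clear from positivity of $\alpha$, $\beta$, $1-p_d$ and $K$.
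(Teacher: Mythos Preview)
Your argument is correct. The paper does not supply a proof for this corollary; it is stated as an immediate consequence of the definitions in Theorem~\ref{t:hardcore}, and your write-up simply makes explicit the running-maximum structure and the bound $g^{(k)}(m)\le K$ that the paper takes for granted.
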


\subsection{Simulation study}
\label{S:simu}

To assess the tightness of the bounds in Theorem~\ref{t:hardcore}, 
we calculated $\hat v_n(\xx)$ and $\tilde v_n(\xx)$ in two regimes, 
a dense one and a sparse one. For the inital pattern $\xx$, a sample
from a Strauss process \cite{KellRipl76} on $W = [0,5]^2$ with 
interaction parameter set to zero was chosen. The activity parameter
was set to give the required intensity: $\beta = 1.0$ in the sparse
regime and $\beta = 4.3$ in the dense regime. For the mark dynamics, we 
used a logistic growth function with $\lambda = 2$ and maximal size 
$K=0.1$; the initial marks were sampled from a Beta distribution with 
shape parameters $\lambda_1 = 2$ and $\lambda_2 = 20$.
The death rate was set to $p_d = 0.05$. Finally, we used 
discount factor $\alpha = 0.9$ and reward parameter $R=1$. 


The results are plotted in Figure~\ref{F:simu}. The left
panels show the pattern $\xx$. In the right panels, the
solid lines are the graphs of $\hat v_n(\xx)$ as a function
of $n$, the dotted lines show $\tilde v_n(\xx)$ plotted 
against $n$. Integrals were estimated by the Monte Carlo 
method with $1,000$ samples. In the sparse regime, the 
approximation is quite good, for the denser regime, the 
gap between the two graphs is quite wide except for very 
small $n$. In both cases, the dynamic programming algorithm 
converges rapidly.

\begin{figure}
\includegraphics[width=5in]{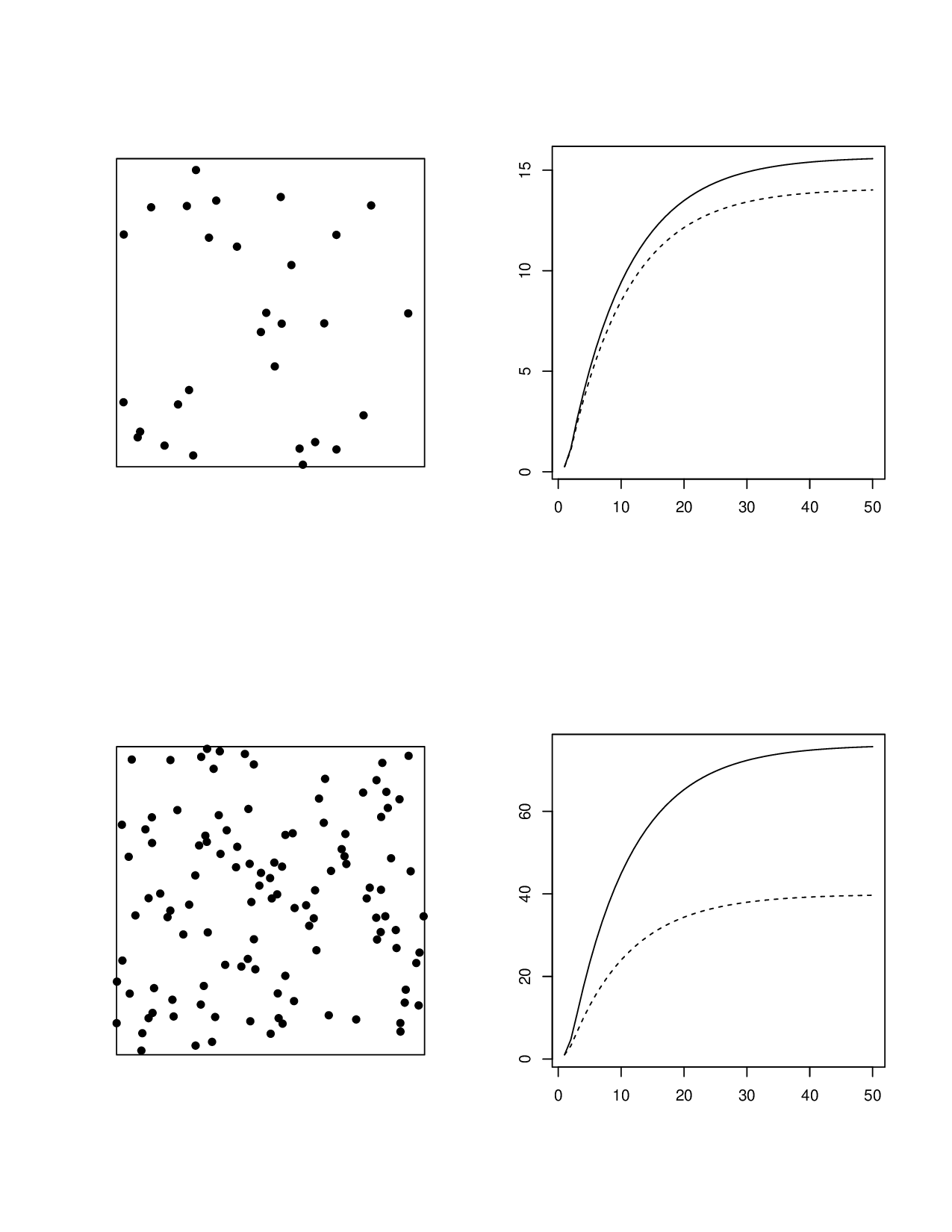}
\caption{Left panels: samples $\xx$ from a Strauss hard core 
process with intensity $\beta = 1.0$ (top) and $\beta = 4.3$ 
(bottom) on $[0,5]^2$. 
Right panels: graphs of $\hat v_n(\xx)$ (solid lines) and 
$\tilde v_n(\xx)$ (dotted lines) against $n$ for the  
birth-death-growth dynamics of Section~\ref{S:simu}.}
\label{F:simu}
\end{figure}

\section{Conclusion}

In this paper we considered optimal policies for Markov decision 
problems inspired by forest harvesting. We proved that French
thinning is optimal when births follow a Poisson process and 
marks grow logistically. When the points are required to respect
a hard core distance, we derived upper and lower bounds on the 
discounted total expected reward for general birth-death-growth 
dynamics. Although we focused on a homogeneous birth process, the 
results carry over to the case where the birth process is governed 
by some spatially varying intensity function. 

In future it would be of interest to study configuration dependent
asymmetric birth and growth models \cite{Lies08,Lies09,Rensetal09}.
Indeed, in a forestry setting, the growth of well-established, large 
trees may hardly be hampered by the emergence of saplings close by, 
while it would be harder for young and small trees to flourish near 
large ones. Moreover, the natural environment, such as the availability 
of nutrients, might play a role. Finally, refinements of the action 
space that allow for different thresholds in different mark strata 
could be investigated.

\end{document}